  \theoremstyle{plain}
  \newtheorem{theorem}{Theorem}[section]
  \newtheorem{lemma}{Lemma}[section]
  \newtheorem{corollary}{Corollary}[section]
   \theoremstyle{remark}
  \newtheorem{remark}{Remark}[section]
  \numberwithin{equation}{section}
  \numberwithin{figure}{section}
\renewcommand{\baselinestretch}{1.00}
\begin{document}

\title{ A note on second derivative estimates for Monge-Amp\`ere type equations.}

\author{Neil S. Trudinger}
\address{Mathematical Sciences Institute, The Australian National University,
              Canberra ACT 0200, Australia}

\

\thanks{Research supported by Australian Research Council Grant (DP180100431)}

\subjclass[2000]{35J96, 90B06, 78A05.}


\keywords{Monge-Amp\`ere type equations, second derivative estimates, generated Jacobian equations, existence}

\maketitle

\abstract {In this note we revisit previous Pogorelov type interior and global  second derivative estimates of the author, F. Jiang and J. Liu for solutions of Monge-Amp\`ere type partial differential equations. Taking account of recent strict convexity regularity results of Guillen, Kitagawa and Rankin, and following our earlier work in the optimal transportation case, we remove the monotonicity assumptions in the more general case of generated Jacobian equations and consequently in the subsequent application to classical solvability and global regularity for second boundary value problems.}
\endabstract


\baselineskip=12.8pt
\parskip=3pt
\renewcommand{\baselinestretch}{1.38}

\section{Introduction}\label{Section 1}

\vskip10pt

In this note, we are concerned with Pogorelov type interior and global second derivative estimates of elliptic solutions of nonlinear partial differential equations of Monge-Amp\`ere type, (MATEs), which amplify and improve earlier results in \cite{LT2010, JT2014}and \cite{T2021}.  Such equations can be written in the general form, 

\begin{equation} \label{MATE}
\text{det}[ D^2u - A(\cdot,u,Du)] = B(\cdot,u,Du),
\end{equation}

\noindent 
\noindent where $A$ and $B$ are respectively $n \times n$ symmetric
matrix valued and scalar functions on a domain $\mathcal U\subset \mathbb{R}^n\times\mathbb{R}\times\mathbb{R}^n$ and $Du$ and $D^2u$ denote respectively the gradient and Hessian matrix of the scalar function $u\in C^2(\Omega)$, with one jet $J_1[u] (\Omega) \subset \mathcal U$, where  $\Omega$ is a bounded domain in $\mathbb{R}^n$.  A solution $u\in C^2(\Omega)$ of
(\ref{MATE}) is called  elliptic, (degenerate elliptic), whenever $ w = D^2u-A(\cdot,u,Du)>0, (\ge 0)$, which implies $B>0, (\ge0)$. We assume throughout that
$A$ and $B$ are $C^2$ smooth, with $B > 0$,  and that the matrix function $A$ is \emph{regular} with respect to the gradient variables, that is, denoting points in $\mathcal U$ by $(x,z,p)$, 

\begin{equation}\label{1.2}
A^{kl}_{ij}\xi_i\xi_j\eta_k\eta_l: = (D_{p_kp_l}A_{ij}) \xi_i\xi_j\eta_k\eta_l \ge 0
\end{equation}

\noindent in $\mathcal U$, for all  $\xi,\eta \in \mathbb{R}^n$ such that $\xi \!\cdot\! \eta = 0$.

For convenience, we will also assume throughout that $\mathcal U$ is convex in $p$ for fixed $x$ and $u$. Condition \eqref{1.2}, which originated in its strict form A3 for regularity in the special case of optimal transportation in \cite {MTW2005},  corresponds to the weak form A3w introduced for global regularity in \cite{Tru2006, TW2009} and was shown to be sharp in \cite{Loe2009}. When the matrix function $A$ is not assumed twice differentiable, we may more generally express its regularity as  convexity of the form $A\xi.\xi$ along straight line segments in $p$, othogonal to $\xi$, which also suffices for the associated optimal transportation convexity theory and the more general convexity theory of generating functions \cite{LT2020,LT2021}.

Following our note on optimal transportation regularity \cite{T2013}, we can then remove the monotonicity conditions in our second derivative estimates for classical solutions of generated Jacobian equations  in \cite {JT2014}, under an appropriate local strict convexity control, thereby providing a corresponding extension of the classical existence result in Theorem 1.1 in \cite{JT2018}. Paralleling \cite{T2013},  the crucial elements in our approach are an extension of the Pogorelov type estimate in \cite{LT2010}, using Lemma 3.3 in \cite{T2021}, and the strict convexity result in \cite{GK2017}. These results have been flagged in \cite {T2021} and can also be  further improved using recent work by Rankin \cite {Ra2021}. 

We will treat  the Pogorelov estimates in Section 2, followed by their application to second derivative bounds for solutions of the second boundary value problem for generated Jacobian equations in Section 3. Finally in Section 4 we will consider the application to the existence of globally smooth classical solutions, thereby removing the monotonicity conditions on the matrix function $A$ in \cite{JT2018, Ra2021}. 

\vskip10pt

\section{Pogorelov estimates}\label{Section 2}

We begin with an interior Pogorelov estimate which combines  those in \cite{JT2014, LT2010, T2021}. In its formulation we use  the linearized operator $\mathcal L$ defined by 

\begin{equation}\label{linearized operator}
\mathcal L= \mathcal L[u] : = w^{ij}[D_{ij}-D_{p_k}A_{ij}(\cdot,u,Du)D_k] ,
\end{equation}

\noindent where  $[w^{ij}]$ denotes the inverse of $w= [w_{ij}]$.

\begin{theorem}
Let $u\in C^4(\Omega)\cap C^{0,1}(\bar \Omega)$ be an elliptic solution of \eqref{MATE} in $\Omega$,  and let $u_0 \in C^2(\Omega)\cap C^{0,1} (\bar\Omega)$ be a degenerate elliptic supersolution such that  $J_1[u], J_1[u_0] \subset \mathcal U_0 \subset\subset \mathcal U$  and $u=u_0$ on $\partial\Omega$, $u < u_0$ in $\Omega$. Suppose there exists a barrier function $\phi \ge 0, \in C^2(\bar\Omega)$ satisfying 
\begin{equation}\label{barrier}
\mathcal L\phi \ge w^{ii} -C^\prime
\end{equation}

\noindent in $\Omega$, for some constant $C^\prime \ge 0$. Then there exist positive constants $\beta$, $\delta_1$, $\delta_2$ and $C$ depending on $n, \mathcal U, \mathcal U_0, |A, \log B|_{2,\mathcal U_0}, C^\prime$ and $|\phi|_{1;\Omega}$ such that, if either (i) $D_u A > -\delta_1 I$ in $\mathcal U_0$ or (ii) $\text{diam}\ \Omega < \delta_2$, then 

\begin{equation} \label{estimate}
\sup_\Omega (u_0 -u)^\beta |D^2u| \le C.
\end{equation}
\end{theorem}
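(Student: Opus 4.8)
The plan is to adapt the classical Pogorelov argument: introduce an auxiliary function that combines a power of the gap $u_0 - u$, the largest eigenvalue direction of the Hessian $w = D^2u - A$, and an exponential of the gradient, then apply the maximum principle using the linearized operator $\mathcal L$. Concretely, I would consider a test function of the form
\begin{equation*}
V = \beta \log(u_0 - u) + \log w_{\xi\xi} + \tfrac{1}{2}\tau |Du|^2,
\end{equation*}
where $w_{\xi\xi} = w_{ij}\xi_i\xi_j$ is evaluated at the unit vector $\xi$ realizing the maximum of $V$ over $\Omega \times \mathbb{S}^{n-1}$, and $\beta, \tau$ are constants to be chosen. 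Since $u = u_0$ on $\partial\Omega$ and $u < u_0$ inside, $V \to -\infty$ at $\partial\Omega$, so the maximum is attained at an interior point $x_0$, and after rotating coordinates we may take $w$ diagonal at $x_0$ with $\xi = e_1$.

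**The computational core.** The next step is to differentiate $V$ twice and apply $\mathcal L$. Differentiating the equation $\log\det w = \log B$ gives the standard identities $w^{ij}D_k w_{ij} = D_k \log B$ and a corresponding inequality for second derivatives (the term $-w^{ij}w^{rs}D_k w_{ir}D_k w_{js}$ from the concavity of $\log\det$ on positive matrices). Applying $\mathcal L$ to each piece of $V$: the term $\mathcal L \log(u_0 - u)$ produces a favorable $-\mathcal L u$ plus $\mathcal L u_0$ (nonnegative since $u_0$ is a degenerate-elliptic supersolution) minus a quadratic gradient term $-(u_0-u)^{-2} w^{ij}D_i(u_0-u)D_j(u_0-u)$; the key point is that $\mathcal L u = w^{ij}w_{ij} + (\text{lower order}) = n + \ldots$ after accounting for the $A$ terms, while $\mathcal L u_0 \geq 0$. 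The term $\mathcal L \log w_{11}$ is where condition \eqref{1.2}/A3w is essential: differentiating $w_{11} = D_{11}u - A_{11}(\cdot,u,Du)$ twice and commuting derivatives brings in third derivatives of $u$ and the quantity $A^{kl}_{11}$; the regularity condition \eqref{1.2} applied with $\xi = e_1$ and $\eta$ ranging over directions orthogonal to $e_1$ controls the bad sign of these terms, exactly as in the optimal transportation case. The $\frac{1}{2}\tau|Du|^2$ term contributes $\tau w^{ii}$ (good, large) at the cost of controlled lower-order terms, which is what lets us absorb the $-C' $ and the $w^{ii}$ from the barrier.

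**Using the barrier and closing the estimate.** Now I would invoke the barrier hypothesis \eqref{barrier}: adding a multiple of $\phi$ (or rather recognizing that $\mathcal L u_0 \ge 0$ together with the barrier provides the needed $w^{ii}$ positivity) lets us dominate the residual terms. At the maximum point, $DV = 0$ gives $\beta D_i(u_0-u)/(u_0-u) = -D_i \log w_{11} - \tau D_i|Du|^2/2$; substituting this into the quadratic gradient term from $\mathcal L\log(u_0-u)$ converts it into $-\beta^{-1}|D\log w_{11} + \ldots|^2$ type terms which, combined with a Cauchy-Schwarz splitting, absorb the uncontrolled third-order terms (the standard Pogorelov cancellation). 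The inequality $\mathcal L V \le 0$ at $x_0$ then yields, after choosing $\beta$ large and $\tau$ appropriately, a bound of the form $w_{11}(x_0) \le C (u_0 - u)(x_0)^{-\beta}$ times lower-order quantities, which on exponentiating gives \eqref{estimate}.

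**The role of the dichotomy and the main obstacle.** The dichotomy (i) $D_u A > -\delta_1 I$ or (ii) $\operatorname{diam}\Omega < \delta_2$ enters precisely to control terms involving $D_u A$ that arise when differentiating $A(\cdot, u, Du)$ in the directions of $u$: in the monotone case these have the right sign outright, and here a small lower bound $-\delta_1 I$ makes the corresponding term a small multiple of $w^{ii}$ that can be absorbed by the $\tau w^{ii}$ from the gradient term; alternatively a small diameter makes $u_0 - u$ and $|Du|$-oscillation small enough to absorb it by rescaling. The main obstacle — and the heart of the matter, following \cite{T2013} and Lemma 3.3 of \cite{T2021} — is organizing the third-derivative terms so that the A3w inequality \eqref{1.2} can be applied in its degenerate (non-strict) form: one must carefully separate the "orthogonal" part of the third derivatives (where \eqref{1.2} gives control) from the remaining part (which is handled by the Pogorelov cancellation with the $\log(u_0-u)$ term), and verify that no monotonicity of $A$ in $z$ is secretly needed in this separation. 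Once Lemma 3.3 of \cite{T2021} is in hand to handle this bookkeeping, the rest is the standard absorption argument with the barrier.
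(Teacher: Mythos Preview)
Your overall framework is right --- Pogorelov auxiliary function, maximum principle via $\mathcal L$, A3w to handle the third-order terms, cancellation from the first-order condition at the maximum --- but there is a genuine gap in how the barrier $\phi$ enters, and a related confusion between $w_{ii}$ and $w^{ii}$.

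Your auxiliary function $V = \beta\log(u_0-u) + \log w_{\xi\xi} + \tfrac{\tau}{2}|Du|^2$ omits the barrier. The paper builds it in from the start:
\[
v = \log(w_{ij}\xi_i\xi_j) + \tau|Du|^2 + e^{\kappa\phi} + \beta\log(u_0-u),
\]
and the exponentiated term $e^{\kappa\phi}$ is not a cosmetic addition. Applying $\mathcal L$ to it produces, via the hypothesis $\mathcal L\phi \ge w^{ii} - C'$, a favourable $\kappa e^{\kappa\phi}w^{ii}$ together with the extra positive quadratic $\kappa^2 e^{\kappa\phi}w^{ii}|D_i\phi|^2$ from the second derivative of the exponential. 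These are exactly what absorb the $w^{ii}$-scaled errors coming from differentiating $A(\cdot,u,Du)$, and the $\kappa e^{\kappa\phi}D_i\phi$ cross-terms that appear when you substitute $Dv(x_0)=0$ into the bad quadratic $\beta w^{ii}(D_i\eta/\eta)^2$ from $\mathcal L\log\eta$. Your later remark about ``adding a multiple of $\phi$'' does not suffice; the linear addition $\kappa\phi$ lacks the $\kappa^2|D\phi|^2$ piece, and the paper notes explicitly that the exponentiation is the point.

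This is tied to a second error: your claim that $\tfrac{\tau}{2}|Du|^2$ contributes ``$\tau w^{ii}$'' is wrong. A direct computation gives $\mathcal L(\tau|Du|^2) \gtrsim \tau w_{ii}$, the trace of $w$, not of $w^{-1}$. These play different roles: $\tau w_{ii}$ is what eventually forces $w_{11}$ to be bounded, while the favourable $w^{ii}$ must come from the barrier. The dichotomy is also driven by this: in case~(i) the $D_uA$ contribution inside $\mathcal L\log(u_0-u)$ produces an error of order $-\beta\delta_1 w^{ii}$, absorbed by $\kappa e^{\kappa\phi}w^{ii}$ provided $\kappa \ge C(\tau + \beta\delta_1)$; one then fixes $\tau \ge C$, $\kappa \ge C\tau$, $\beta \ge C(\tau^2 + e^{\kappa\max\phi})$, and finally $\delta_1 \le 1/\beta$ so the chain closes. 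Without the barrier term in your auxiliary function there is no source of favourable $w^{ii}$, and the absorption argument cannot be completed.
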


\begin{proof}

First we note that Case (ii) is proved in \cite {T2021}, Lemma 3.3 and  the barrier hypothesis \eqref{barrier} is automatically satisfied with $C^\prime = 0$, for suitable $\delta_2$, by taking $\phi(x) =$ const.$|x-x_1|^2$  for some point $x_1\in\Omega$.  Accordingly we will concentrate on Case (i). Adapting the proofs of Lemma 3.3 in \cite{T2021}, Theorem 1.2 in \cite{JT2014} and Theorem 2.1 in \cite {LT2010}, we first consider an auxiliary function,

\begin{equation}
v=v(\cdot, \xi)=\log(w_{ij}\xi_i\xi_j) +\tau |Du|^2+ e^{\kappa\phi} + \beta \log(u_0 -u)
\end{equation}

\noindent  where $|\xi| =1$ and  $\tau$, $\kappa$, $\beta$ are positive constants to be chosen.  Then we obtain, in place of inequality (3.9) in \cite{T2021}, at a maximum point $x_0$ and vector $\xi=e_1$ of $v$ in $\Omega$,

\begin{equation}\label{2.5}
Lv\ge \tau w_{ii} +\frac{\kappa}{2}(1+\kappa |D_i\phi|^2) e^{\kappa\phi} w^{ii}  - C(\tau+\kappa e^{\kappa\phi}) + \frac{1}{2w_{11}^2}\sum_{i>1}w^{ii}(D_iw_{11})^2 - \frac{C\beta}{\eta} -  \frac{C\beta}{\eta^2}w^{ii}|D_i\eta|(\eta+|D_i\eta|),
\end{equation}

\noindent provided $\tau \ge C$ and $\kappa \ge C(\tau + \beta\delta_1)$, where $C$ is a constant depending on the same quantities as in the estimate \eqref{estimate} and $L = \mathcal L  - D_{p_k}\log B(\cdot,u,Du)D_k, \eta = u_0 - u$. 

From the condition  $Dv(x_0) = 0$, we then have, for each  $i=1, \cdots n$,

\begin {equation} \label{2.6}
\beta\frac{|D_i\eta|}{\eta} \le \frac{|D_iw_{11}|}{w_{11}} + C \tau (w_{ii} + 1) + \kappa e^{\kappa\phi} |D_i \phi |
\end{equation}

\noindent so that for each $i>1$, 

\begin {equation} \label{2.7}
\beta \left(\frac{D_i\eta}{\eta}\right)^2 \le \frac{2}{\beta w_{11}^2} \left(D_iw_{11}\right)^2
       + C\frac{\tau^2}{\beta}(w_{ii}^2+1)+\frac{3\kappa^2}{\beta} e^{2\kappa\phi} (D_i \phi )^2.
\end{equation}

Assuming $\eta w_{11} (x_0) \ge \beta$  and combining \eqref{2.5}, \eqref{2.6}, \eqref{2.7}, we then obtain $Lv(x_0) >0$, by choosing $\tau \ge C$, $\kappa \ge C\tau$ , $\beta\ge C(\tau^2 + e^{\kappa\max\phi})$ and $\delta_1 \le \frac{1}{\beta}$ for sufficiently large  constant $C$ depending on $n, \mathcal U, \mathcal U_0, |A, \log B|_{2, \mathcal U}$ and $\phi$ and hence conclude the estimate \eqref{estimate}. 

\end{proof}




For generated Jacobian equations, in \cite{ JT2014}, we construct barriers satisfying \eqref{barrier} when the matrix function $A$ is either non-decreasing or non-increasing with respect to $u$; (see also \cite{JT-Oblique II}). This then includes the special case of optimal transportation equations in \cite{LT2010}, when $A$ is independent of $u$. We also remark that even though we have essentially used the same auxiliary functions as in the proofs of the Pogorelov estimates, Theorem 1.1 in \cite{LT2010} and Theorem 1.2  in \cite{JT2014}, the proof details are somewhat different and our proof here can be seen as clarifying the approaches in those papers. Recently, in the proof of Theorem 1.1 in \cite{JT2021},  we have also adapted the exponentiation of the barrier $\phi$ in this proof  to provide a correction to the proof of second derivative bounds for the Neumann problem in \cite{JTX2016}.  Moreover, we may also, by adapting the extension to the degenerate case in Theorem 1.2 in \cite{JT2021}, express the dependence on $B$ in Theorem 2.1 in terms of $\sup_{\mathcal U_0} B$ and the constants, $C_1, C_2 $ and $C_3$, defined respectively in equations (1.11), (1.12), and (1.13) in \cite{JT2021}.

From Case (ii) in Theorem 2.1 or Lemma 3.3  in \cite{T2021}, we can now infer interior and global second derivative bounds elliptic solutions of generated Jacobian equations satisfying appropriate local strict convexity conditions with only condition (1.2) assumed on the function $A$. Let us recall from \cite{T2014, JT2014}, that equation \eqref{MATE} is a generated Jacobian equation if $A$ is determined by a generating function $g\in C^2(\Gamma)$ for some domain $\Gamma \subset \mathbb{R}^n\times\mathbb{R}^n\times\mathbb{R}$, whose projections $$I(x,y) = \{z\in\mathbb{R} |\  (x,y,z)\in\Gamma\}$$ are open intervals.  Denoting points in $\Gamma$ by $(x,y,z)$, and 
 
 $$\mathcal{U}=\{(x,g(x,y,z),g_x(x,y,z))|\  (x,y,z)\in \Gamma\},$$ 
 
 \noindent this means that $g_z < 0$ in $\Gamma$ and there exist unique $C^1$ mappings $Y, Z: \mathcal U \rightarrow \mathbb{R}^n, \mathbb{R}$ satisfying
 
\begin{equation}\label{generating equation}
g(x,Y,Z)=u, \quad g_x(x,Y,Z)=p,
\end{equation}

\noindent with the matrix $A$ given by 
\begin{equation}\label{A}
A(\cdot,u,p)=g_{xx}(\cdot,Y(\cdot,u,p),Z(\cdot,u,p)). 
\end{equation}

Now suppose $u\in C^2(\Omega)$, $J_1[u] (\Omega) \subset \mathcal U$ and $u$ is elliptic in $\Omega$. Then $u$ is locally strictly $g$-convex in 
$\Omega$, in the sense that for each $x_0 \in \Omega$, there exists a $g$-affine function $g_0 = g(\cdot, y_0,z_0)$, with $y_0 = Y(\cdot,u,Du)(x_0)$,
$z_0 = Z(\cdot,u,Du)(x_0)$, such that $g(x_0) = u(x_0)$ and $g_0 < u$ in $\mathcal N_0 -\{x_0\}$, for some neighbourhood $\mathcal N_0$ of $x_0$.
For $h>0$, we can define the section 

$$ S_h(u,g_0) = \{ u\in\Omega | (x,y_0,z_0 -h) \in \Gamma,\  u(x) < g_0(x,y_0,z_0-h) \}, $$

\noindent and let $S_h(u,g_0,x_0)$ denote the component of $S_h(u,g_0)$ containing $x_0$.  For $0<R< \text{dist}(x_0, \partial\Omega)$, we can then define a modulus of strict $g$-convexity of $u$ at $x_0$ by 

$$ \omega(R) = \omega[u](x_0,R) := \sup\{h | S_h(u,g_0,x_0) \subset B_R(x_0)\}. $$

From Lemma 3.3 in \cite {T2021}, or Corollary 2.1, we then have the following interior second derivative estimate for elliptic solutions of generated Jacobian equations. In it's formulation we use the quantity

\begin{equation} \label {convexity modulus}
 \omega[u](\Omega^\prime, \Omega) = \inf_{x_0\in \Omega^\prime, R< R_0} \omega[u](x_0,R) 
 \end{equation}

\noindent to denote a lower bound on the modulus of strict $g$-convexity of $u$ over a subdomain $\Omega^\prime \subset\subset \Omega$, where $R_0 = $dist$(\Omega^\prime,\partial\Omega)$.

\begin{theorem}
Let $u\in C^4(\Omega)\cap C^{0,1}(\bar \Omega)$ be an elliptic solution of  a generated Jacobian equation \eqref{MATE} in the domain $\Omega$, with  $J_1[u] \subset \mathcal U_0 \subset\subset \mathcal U$. Then, for any strictly contained subdomain $\Omega^\prime$, we have the estimate

\begin{equation} \label{interior estimate}
\sup_{\Omega^\prime} |D^2u| \le C,
\end{equation}

\noindent where $C$ depends on $n,\mathcal U, \mathcal U_0, g, B, R_0$ and $\omega[u](\Omega^\prime, \Omega)$.

\end{theorem}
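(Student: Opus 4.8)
The plan is to localize the global Pogorelov estimate of Theorem 2.1 (equivalently, Lemma 3.3 in \cite{T2021}) to a section of the solution, and then iterate over a cover of $\Omega^\prime$ by such sections. First I would fix $x_0 \in \Omega^\prime$ and, using the local strict $g$-convexity guaranteed for any elliptic $u$, pick the $g$-affine support $g_0 = g(\cdot,y_0,z_0)$ at $x_0$ with $y_0 = Y(\cdot,u,Du)(x_0)$, $z_0 = Z(\cdot,u,Du)(x_0)$. For $h>0$ small, the function $u_0 := g(\cdot,y_0,z_0-h)$ is a $g$-affine function, hence a (degenerate elliptic) supersolution of \eqref{MATE}, since $\det[D^2 u_0 - A(\cdot,u_0,Du_0)] = 0 \le B$; moreover, because $g_z<0$ and $g_0$ supports $u$ from below at $x_0$, we have $u_0 > u(x_0) = g_0(x_0)$ near $x_0$ and $u_0 = u$ on $\partial S_h(u,g_0,x_0)$ while $u_0 > u$ inside. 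Thus on the section $\Omega_h := S_h(u,g_0,x_0)$ the pair $(u, u_0)$ satisfies the hypotheses of Theorem 2.1 with $\Omega$ replaced by $\Omega_h$, provided $h$ is chosen small enough that $\mathrm{diam}\,\Omega_h < \delta_2$; by the definition of the modulus of strict $g$-convexity, $\Omega_h \subset B_R(x_0)$ once $h \le \omega[u](x_0,R)$, so choosing $R$ with $2R < \delta_2$ and $h = \tfrac12\,\omega[u](\Omega^\prime,\Omega)\wedge \omega[u](x_0,R)$ gives a section of controlled diameter.

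On $\Omega_h$ we then invoke Case (ii) of Theorem 2.1, whose constants $\beta, \delta_1, \delta_2, C$ depend only on $n, \mathcal U, \mathcal U_0$ (note $J_1[u_0]\subset\subset\mathcal U$ automatically since $u_0$ is $g$-affine with parameters near those of $u$ at $x_0$, hence a slightly enlarged $\mathcal U_0$ suffices, controlled by $g$ and the a priori Lipschitz bound), $|A,\log B|_{2,\mathcal U_0}$, which in the generated case is controlled by $g$ and $B$, and $|\phi|_{1;\Omega_h}$ — but in Case (ii) $\phi$ is the explicit quadratic $\mathrm{const}\cdot|x-x_1|^2$ with $C^\prime=0$, so $|\phi|_{1;\Omega_h}$ is itself controlled by $\delta_2$. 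Hence
\begin{equation}\label{localized}
\sup_{\Omega_h}(u_0 - u)^\beta |D^2 u| \le C
\end{equation}
with $C$ of the asserted dependence. Since $u_0 - u = g(\cdot,y_0,z_0-h) - u \ge c(h) > 0$ on the smaller section $S_{h/2}(u,g_0,x_0)$ — the difference being bounded below there by a positive quantity depending on $g$, $h$ and hence on $\omega[u](\Omega^\prime,\Omega)$ and $R_0$ — evaluating \eqref{localized} at $x_0$ yields $|D^2u(x_0)| \le C\,c(h)^{-\beta} =: C_1$, with $C_1$ depending only on $n,\mathcal U,\mathcal U_0,g,B,R_0$ and $\omega[u](\Omega^\prime,\Omega)$. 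Taking the supremum over $x_0\in\Omega^\prime$ gives \eqref{interior estimate}.

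The main obstacle is verifying uniformity: one must check that the parameters $(y_0,z_0)$ of the support function, and of the shifted function $u_0 = g(\cdot,y_0,z_0-h)$, range over a compact subset of the domain $\Gamma$ of $g$ as $x_0$ varies over $\Omega^\prime$, so that the relevant $C^2$-norms of $g$, the enlarged jet-box $\mathcal U_0$, and the lower bound $c(h)$ on $u_0-u$ are genuinely uniform rather than $x_0$-dependent. This follows from the a priori bounds $J_1[u]\subset\mathcal U_0\subset\subset\mathcal U$ (so $(y_0,z_0)=(Y,Z)(\cdot,u,Du)(x_0)$ lies in a fixed compact set by continuity of $Y,Z$) together with the uniform lower bound $\omega[u](\Omega^\prime,\Omega)$ on the modulus of strict convexity, which is precisely what allows $h$ to be chosen once and for all and keeps $c(h)$ bounded below; the quantitative estimate $u_0-u\ge c(h)$ on $S_{h/2}$ is a routine consequence of the strict monotonicity $g_z<0$ and the comparison $u < g_0$ off $x_0$, using the smoothness of $g$ on the compact parameter set. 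Alternatively, one can bypass the explicit barrier and quote Lemma 3.3 of \cite{T2021} directly on each section, the argument being identical.
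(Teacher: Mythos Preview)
Your proposal is correct and follows precisely the route the paper intends: the paper does not write out a separate proof of Theorem 2.2 but indicates that it follows from Case (ii) of Theorem 2.1 (equivalently, Lemma 3.3 in \cite{T2021}) applied on sections $S_h(u,g_0,x_0)$, with the shifted $g$-affine function $g(\cdot,y_0,z_0-h)$ playing the role of the supersolution $u_0$ and the modulus $\omega[u](\Omega',\Omega)$ controlling the section diameter. Your write-up supplies exactly these details, including the lower bound on $u_0-u$ via $g_z<0$ and the uniformity check, so there is nothing to add.
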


If some neighbourhood, $\mathcal N^\prime$, of $\partial\Omega$ is $A$-bounded, we can then infer a global second derivative bound from Theorem 2.2, using the global estimate, Theorem 3.1, in \cite {TW2009}.  Here we recall the definition from  \cite{Tru2006, LT2010} that a domain $\Omega$ is $A$-bounded with respect to $u$ if there exists a barrier function $\phi\in C^2(\bar\Omega)$ satisfying 
\begin{equation}
\big{[} D_{ij} \phi  - D_pA_{ij}(\cdot,u,Du)\big{]} \xi_i \xi_j \ge |\xi|^2,
\end{equation}
\noindent in $\Omega$, for all $\xi \in \mathbb{R}^n$. Clearly, if $\Omega$ is $A$-bounded with respect to $u$ then the barrier condition \eqref{barrier} is satisfied with $C^\prime = 0$.

\begin{corollary}
Let $u\in C^4(\Omega)\cap C^2(\bar \Omega)$ be an elliptic solution of  a generated Jacobian equation \eqref{MATE} in the domain $\Omega$, 
$J_1[u] \subset \mathcal U_0 \subset\subset \mathcal U$, and suppose $\mathcal N^\prime = \Omega - \bar\Omega^\prime$ is $A$-bounded, with respect to $u$, for some subdomain $\Omega^\prime \subset\subset \Omega$, with barrier $\phi$. Then we have the estimate,

\begin{equation} \label{global estimate}
\sup_\Omega  |D^2u| \le C ( 1+ \sup_{\partial\Omega} |D^2u|),
\end{equation}

\noindent where  $C$ depends on $n,\mathcal U, \mathcal U_0, g, B, R_0$, $\omega[u](\Omega^\prime, \Omega)$  and $\phi$.

\end{corollary}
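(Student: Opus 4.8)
The plan is to combine the interior estimate of Theorem 2.2 with the global boundary estimate from \cite{TW2009}, using the $A$-boundedness of the collar neighbourhood $\mathcal N^\prime$ to bridge between them. First I would observe that on the strictly contained subdomain $\Omega^\prime$, Theorem 2.2 already gives $\sup_{\Omega^\prime}|D^2u|\le C$ with $C$ depending only on $n,\mathcal U,\mathcal U_0,g,B,R_0$ and the convexity modulus $\omega[u](\Omega^\prime,\Omega)$. So the remaining task is to control $|D^2u|$ on the closed collar $\bar{\mathcal N}^\prime = \bar\Omega - \Omega^\prime$, where the interior estimate degenerates near $\partial\Omega$ but where we are now allowed to invoke the boundary data $\sup_{\partial\Omega}|D^2u|$.

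Next I would set up the auxiliary function in the spirit of the proof of Theorem 2.1, but with the factor $(u_0-u)^\beta$ removed, since we no longer vanish on $\partial\Omega$; instead one takes $v = \log(w_{ij}\xi_i\xi_j) + \tau|Du|^2 + e^{\kappa\phi}$ on $\bar{\mathcal N}^\prime$, where $\phi$ is the $A$-boundedness barrier. Because $\mathcal N^\prime$ is $A$-bounded, $\mathcal L\phi \ge w^{ii}$ with $C^\prime = 0$, and the exponentiated term $e^{\kappa\phi}$ contributes $\tfrac{\kappa}{2}(1+\kappa|D_i\phi|^2)e^{\kappa\phi}w^{ii}$, dominating the trace-of-inverse terms exactly as in \eqref{2.5}. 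One then examines a maximum of $v$ over $\bar{\mathcal N}^\prime$: if it occurs on the inner boundary $\partial\Omega^\prime$, it is controlled by the interior bound from Theorem 2.2; if it occurs on $\partial\Omega$, it is controlled by $\sup_{\partial\Omega}|D^2u|$; and if it occurs at an interior point of $\mathcal N^\prime$, the differential inequality $Lv(x_0) > 0$ forces a contradiction once $\tau,\kappa$ are chosen large (depending on $\phi$). This yields $\sup_{\mathcal N^\prime}|D^2u| \le C(1 + \sup_{\partial\Omega}|D^2u| + \sup_{\Omega^\prime}|D^2u|)$, and combining with Theorem 2.2 gives \eqref{global estimate}. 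Alternatively, as the text suggests, one may simply quote Theorem 3.1 of \cite{TW2009} directly with the boundary second derivatives as input, the hypotheses of that theorem being met on account of the $A$-boundedness and the regularity \eqref{1.2}.

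The main obstacle is the matching argument across $\partial\Omega^\prime$: one must ensure that the constant from Theorem 2.2 on $\Omega^\prime$ and the barrier argument on $\mathcal N^\prime$ fit together with consistent dependence, in particular that the convexity modulus $\omega[u](\Omega^\prime,\Omega)$ and $R_0$ genuinely control the interior piece so nothing on the collar is left uncontrolled. A secondary technical point is that, unlike in Theorem 2.1, there is no $(u_0-u)^\beta$ weight to absorb bad terms, so the barrier $\phi$ must carry the entire burden of beating $w^{ii}$; this is exactly why $A$-boundedness (strict positivity in \eqref{barrier}, i.e. coefficient $1$ rather than an arbitrary constant) is needed rather than the weaker hypothesis of Theorem 2.1, and it is why the exponentiation $e^{\kappa\phi}$ with $\kappa$ large is essential. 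Granting these, the estimate \eqref{global estimate} follows with the stated dependence on $n,\mathcal U,\mathcal U_0,g,B,R_0,\omega[u](\Omega^\prime,\Omega)$ and $\phi$.
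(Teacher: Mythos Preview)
Your proposal is correct and matches the paper's approach, which is stated in the sentence preceding the corollary rather than in a formal proof: one combines the interior estimate of Theorem 2.2 on $\Omega^\prime$ with the global estimate of Theorem 3.1 in \cite{TW2009} applied on the collar $\mathcal N^\prime$, where the $A$-boundedness supplies the required barrier. Your detailed sketch of the auxiliary function $v = \log(w_{ij}\xi_i\xi_j) + \tau|Du|^2 + e^{\kappa\phi}$ on $\bar{\mathcal N}^\prime$ is precisely a reconstruction of that \cite{TW2009} argument, and your ``alternative'' of simply quoting that theorem directly is in fact what the paper does.
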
 

We will apply Corollary 2.1 in the next section to obtain global second derivative estimates for solutions of the second boundary value problem for generated Jacobian equations, complementing those in \cite{JT2014}.  Here we just note that an appropriate condition for $A$-boundedness of some
$\mathcal N^\prime$ is the uniform $A$-convexity of $\Omega$ with respect to $u$, as defined in \cite{Tru2006, JT2014},  which is also a critical condition for estimating  $D^2u$ on $\partial\Omega$, in \cite {JT2014}.

\vskip10pt

\section{Second boundary value problem}\label{Section 3}

First, we recall that a generated Jacobian equation is a special case of  a prescribed Jacobian equation,

\begin{equation}\label{PJE}
\det DY(\cdot, u, Du)=\psi(\cdot, u, Du),
\end{equation}

\noindent where $\psi$ is a given scalar function on $\mathcal U$.  In this case the scalar function $B$ in \eqref{MATE} is given by 

\begin{equation} \label{B}
B(\cdot,u,p)=\det E(\cdot,Y(\cdot,u,p),Z(\cdot,u,p))\psi(\cdot,u,p). 
\end{equation}

\noindent where the matrix function $E$, given by

$$E = [E_{i,j}] =  g_{x,y} - (g_z)^{-1}g_{x,z}\otimes g_y.$$
satisfies $\det E \ne 0$.

The second, (or natural), boundary value problem for prescribed Jacobian equations is to prescribe the image
\begin{equation}\label{Second boundary problem}
Tu(\Omega):=Y(\cdot,u,Du)(\Omega)=\Omega^*,
\end{equation}
where $\Omega^*\subset\mathbb{R}^n$ is a target domain.  

So far our assumptions on the generating function $g$ correspond to conditions A1, A2 and A3w in \cite{JT2014}. To prove second derivative estimates for solutions of \eqref{MATE},\eqref{Second boundary problem}, we also assume the dual condition A1$^*$, namely that the mapping $Q: = -g_y/g_z$ is one-to-one in $x$, for all $(x,y,z) \in \Gamma$. We also recall the notion of dual generating function $g^*$, defined on the dual set 
$$\Gamma^* :=\{(x,y,g(x,y,z)) |  (x,y,z) \in \Gamma\}$$
by $$g(x,y,g^*(x,y,u))=u.$$ 

As in \cite{JT2018}, it will be convenient to formulate our second derivative bounds using domain convexity assumptions expressed in terms of the mapping $Y$. Namely, for an open interval $J$, satisfying  $\Omega \times \Omega^* \times J \subset\subset \Gamma^*$, we repeat the following definitions from \cite{JT2018}.

The $C^2$ domain $\Omega$ is $Y$-convex (uniformly $Y$-convex) with respect to $\Omega^*\times J$ if it is connected and
\begin{equation}\label{Y-convex}
        [D_i\gamma_j(x)- D_{p_k}A_{ij}(x,u_0,p)\gamma_k(x)]\tau_i\tau_j\ge 0,(\delta_0),
    \end{equation}
for all $x\in\partial\Omega$, $u_0\in {J}$, $Y(x,u_0,p) \in \Omega^*$, unit outer normal $\gamma$ and unit tangent vector $\tau$, (for some constant $\delta_0>0$).

The domain $\Omega^*$ is $Y^*$-convex (uniformly $Y^*$-convex) with respect to $\Omega\times  J$
if  the images
$$\mathcal P(x,u_0,\Omega^*) = \{p\in\mathbb{R}^n | \ (x,u_0,p)\in \mathcal U, Y(x,u_0,p)\in\Omega^*\}$$
are convex for all $(x,u_0)\in \Omega\times J$, (uniformly convex for all $x\in\overline\Omega$, $u\in\overline J$).

As remarked in \cite {JT2018},  $Y^*$-convexity is equivalent to the notion of $g^*$-convexity, introduced in \cite{T2014}, while $Y$-convexity is implied by $g$-convexity with respect to $y\in \Omega^*$ and $z =g^*( x,y,u_0)$ for all $x\in \Omega$ and $u_0\in J$. Moreover, under this stronger condition, elliptic solutions $u$ of the boundary value problem \eqref{MATE}, \eqref{Second boundary problem}, satisfying $u(\Omega) \subset\subset J$, will be globally strictly $g$-convex in $\Omega$ \cite{T2021}. 

By combining Corollary 2.2 with Lemma 3.2 in \cite{JT2014} and Theorem 2.3 in \cite{GK2017}, or more specifically Theorem 1 in \cite{Ra2021}, we now obtain the following complimenting estimate to Theorem 3.1 in \cite{JT2014}. In its formulation and applications, it will be convenient to fix an open interval $J$, satisfying $\Omega \times \Omega^* \times J \subset\subset \Gamma^*$.

\begin{theorem}\label{Th3.1}
Let $u\in C^4(\bar\Omega)$, with $J_1[u] \subset \mathcal U_0 \subset\subset \mathcal U$, $u(\Omega) \subset J_0$, for some open interval $J_0 \subset J$, be an elliptic solution
of the second boundary value problem \eqref{MATE}, \eqref{Second boundary problem}  in $\Omega$, where  $A\in
C^2(\mathcal U)$ is given by \eqref{A} with generating function $g\in C^4(\Gamma)$, satisfying
conditions A1, A2, A1* and A3w,  $B >0,\in
C^2(\mathcal U)$ and the domains $\Omega, \Omega^* \in C^4$ are respectively
uniformly $Y$-convex and uniformly $Y^*$-convex with respect to $\Omega^*\times J_0$ and $\Omega\times J_0$. Suppose additionally that (i) $\Omega \subset \subset \Omega_0$ for a domain $\Omega_0$, also satisfying $\Omega_0\times \Omega^* \times J \subset\subset \Gamma^*$, which is $g$-convex with respect to $y\in \Omega^*$ and $z =g^*( x,y,u_0)$ for all $x\in \Omega$, $u_0\in J_0$ and (ii) $Tu$ is one-to-one and $u$ is $g$-convex in $\Omega$, with any $g$-support, $g_0$, satisfying $g_0(\Omega) \subset J$. Then we have the estimate,
\begin{equation}\label{bound}
\sup\limits_\Omega |D^2u| \le C,
\end{equation}
where the constant $C$ depends on $n, \mathcal U, \mathcal U_0, g, B, \Omega, \Omega^*,  \Omega_0, J_0$ and $J$.
\end{theorem}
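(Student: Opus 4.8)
The plan is to derive \eqref{bound} by splitting it into an interior bound and a boundary bound, exactly as in the proof of Theorem 3.1 in \cite{JT2014}, but with the monotonicity hypotheses on $A$ replaced throughout by the strict $g$-convexity control now available unconditionally. First I would invoke the boundary second derivative estimate: under conditions A1, A2, A1*, A3w, uniform $Y$-convexity of $\Omega$ and uniform $Y^*$-convexity of $\Omega^*$, Lemma 3.2 in \cite{JT2014} furnishes a bound $\sup_{\partial\Omega}|D^2u| \le C$ with $C$ depending on the stated quantities; this step is independent of monotonicity. Then the task reduces to controlling $|D^2u|$ in the interior, and here the key new ingredient is that, by hypothesis (ii) together with the global strict $g$-convexity mechanism (the $g$-convexity of $u$ with $g$-supports landing in $J$, combined with the one-to-one-ness of $Tu$), the modulus of strict $g$-convexity $\omega[u]$ does not degenerate.

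Concretely, the second step is to establish a uniform positive lower bound on $\omega[u](\Omega, \Omega_0)$, i.e. on the sections $S_h(u,g_0,x_0)$ staying inside fixed balls. This is where Theorem 2.3 in \cite{GK2017} (or, sharper, Theorem 1 in \cite{Ra2021}) enters: a $g$-convex solution whose $g$-supports stay within the compactly contained range $\Omega_0 \times \Omega^* \times J \subset\subset \Gamma^*$, with $Tu$ injective, is strictly $g$-convex with a modulus controlled by the structural data. The extension of $\Omega$ to $\Omega_0$ in hypothesis (i), with $\Omega_0$ $g$-convex with respect to the relevant $(y,z)$ range, is precisely what gives room for the sections emanating from points of $\overline\Omega$ to be contained in a fixed subdomain of $\Omega_0$, so that $R_0 = \mathrm{dist}(\Omega, \partial\Omega_0) > 0$ is a fixed positive constant and $\omega[u](\overline\Omega, \Omega_0) \ge \omega_0 > 0$ with $\omega_0$ depending only on the listed data.

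The third step is then a direct application of Theorem 2.2 with $\Omega$ replaced by $\Omega_0$ and $\Omega'$ replaced by a fixed neighbourhood of $\overline\Omega$ inside $\Omega_0$: this yields $\sup_{\Omega}|D^2u| \le C$ with $C$ depending on $n, \mathcal U, \mathcal U_0, g, B, R_0$ and $\omega[u](\overline\Omega, \Omega_0)$, hence, via Step 2, on the data listed in the theorem. (Alternatively one argues through Lemma 3.3 in \cite{T2021} directly.) Combining the interior bound from Step 3 with the boundary bound from Step 1 gives \eqref{bound}; note that in this route one does \emph{not} even need Corollary 2.2 or the $A$-boundedness of a boundary neighbourhood, since the interior estimate already reaches $\overline\Omega$ because $\overline\Omega$ is compactly contained in $\Omega_0$.

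The main obstacle is Step 2: one must check that the strict convexity results of \cite{GK2017, Ra2021}, which are naturally stated for $g$-convex functions (potentials) on the full domain, apply with a \emph{quantitative} modulus that is uniform over the class of solutions under consideration — this requires that all the relevant data ($g$-supports, targets, the interval $J$) remain in a fixed compact subset of $\Gamma^*$, which is exactly what hypotheses (i), (ii) and $u(\Omega) \subset J_0 \subset\subset J$ are designed to guarantee, and that the injectivity of $Tu$ is genuinely used to rule out the degenerate sections. A secondary technical point is to verify that enlarging the domain to $\Omega_0$ is compatible with $u$ remaining $g$-convex with $g$-supports in $J$ on the larger scale, i.e. that the sections $S_h(u,g_0,x_0)$ for $x_0 \in \overline\Omega$ are well-defined and contained in $\Omega_0$ for $h$ up to the claimed modulus; this is where the $g$-convexity of $\Omega_0$ with respect to the appropriate $(y,z)$-range is invoked.
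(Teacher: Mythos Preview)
Your overall architecture (boundary estimate from Lemma~3.2 in \cite{JT2014}, interior estimate from Theorem~2.2, strict $g$-convexity from \cite{GK2017,Ra2021}) is the right one and matches the paper. However, Step~3 as written contains a genuine error: you cannot apply Theorem~2.2 with the domain taken to be $\Omega_0$ and $\Omega'$ a neighbourhood of $\overline\Omega$, because $u$ is only given as a $C^4$ elliptic solution of the generated Jacobian equation on $\Omega$, not on $\Omega_0$. The inclusion $\Omega\subset\subset\Omega_0$ in hypothesis~(i) does \emph{not} extend $u$ as a solution; even the natural $g$-convex extension of $u$ to $\Omega_0$ (supremum of $g$-supports) will not be $C^4$, or elliptic, off $\overline\Omega$, so neither Theorem~2.2 nor the Pogorelov argument behind it applies on sections that spill outside $\Omega$. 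Consequently your parenthetical claim that ``the interior estimate already reaches $\overline\Omega$'' and that Corollary~2.1 and $A$-boundedness can be dispensed with is false. The role of $\Omega_0$ is different: it is the ambient $g$-convex domain required by the strict convexity theorems of \cite{GK2017,Ra2021}, not a domain on which the equation holds.

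The correct route, which the paper follows, is to stay inside $\Omega$: one bounds $\omega[u](\Omega',\Omega)$ from below for $\Omega'\subset\subset\Omega$, applies Theorem~2.2 (or rather Corollary~2.1, which couples it with the global estimate of \cite{TW2009} in the $A$-bounded boundary strip furnished by uniform $Y$-convexity), and then uses the boundary estimate from Lemma~3.2 in \cite{JT2014} to close. For the lower bound on $\omega[u](\Omega',\Omega)$, the paper argues by compactness and contradiction: if no uniform lower bound held, weak continuity of the associated Monge--Amp\`ere measures would produce a limiting generalized solution (with $L^\infty$ density bounded above and below) that fails strict $g$-convexity at some interior point, contradicting \cite{GK2017,Ra2021}. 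Your proposed direct quantitative route is also mentioned by the paper as an alternative, via the H\"older gradient estimate of \cite{GK2017} and duality, but either way the modulus you need is $\omega[u](\Omega',\Omega)$, not $\omega[u](\overline\Omega,\Omega_0)$.
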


\begin{proof}

Since the proof is a straightforward extension of that of optimal transportation case in Theorem 2.1 in \cite{T2013}, we just describe it  briefly here. From Lemma 3.2 in \cite{JT2014} and Theorem 2.2, it is enough to prove an estimate from below for the modulus of strict $g$-convexity of of $u$ over any subdomain $\Omega^\prime \subset\subset\Omega$, as formulated in \eqref{convexity modulus}. Defining $\psi$ through equation\eqref{B}, we first note that a $g$-convex, elliptic solution $u\in C^2(\Omega)\cap C^1(\bar\Omega)$ of the second boundary value problem  \eqref{MATE}, \eqref{Second boundary problem}, for which $Tu$ is one-to-one, will be a generalized solution, as defined in Section 4 of \cite{T2014}, with density $f =|\psi|(\cdot,u,Du)$, satisfying $\int_\Omega f = |\Omega^*|$, (and target density $f^*\equiv 1$). Furthermore there exist positive constants $c_0$ and $c_1$, depending on $\psi, \mathcal U, \mathcal U_0$ such that $c_0 \le f \le c_1$. Consequently, if there does not exist a lower bound for $\omega[u](\Omega^\prime, \Omega)$, by virtue of the weak continuity of the associated measures, there would exist a sequence of such solutions converging uniformly to a generalized solution $u$, with $L^\infty$ density $f$ satisfying the same bounds, which is not strictly $g$-convex at some point $x_0\in \bar\Omega^\prime$, thereby contradicting Theorem 1 in \cite{Ra2021} and  Theorem 2.3 in \cite{GK2017}. 

Alternatively, we remark that we can obtain an explicit estimate for $\omega[u]$ from the H\"older gradient estimate in Section 8 of \cite{GK2017} corresponding to Theorem 2.4 there, by using duality.
\end{proof}

From Theorem 3.1 in \cite{JT2014},  we note that the additional conditions (i) and (ii) are not needed if any of conditions A3, A4w or A4$^*$w are also satisfied, or more generally either A3 or the existence of  a  barrier $\phi$ satisfying \eqref{barrier}. As in \cite{JT2014}, we also have a stronger version of Theorem 3.1, using the full strengths of Lemma 2.2 in \cite{JT2014} and Theorem 1 in \cite{Ra2021}. Namely, we need only assume in our domain convexity conditions that $\Omega$ and $\Omega^*$ are respectively uniformly $Y$-convex and $Y^*$-convex with respect to $(\Omega^*, u)$ and $(\Omega, u)$, as defined in \cite{JT2014}, while $\Omega_0$ is $g$-convex with respect to $y\in\Omega^*$ and $z=g^*(x,y,u(x))$, for all $x\in \Omega$, $y\in\Omega^*$. Moreover, taking account of the injectivity of $Tu$, the $Y$-convexity conditions are equivalent to the uniform $g$-convexity of $\Omega$ with respect to the dual function $v = u^*_g$ on $\Omega^*$ and the uniform $g^*$-convexity of $\Omega^*$ with respect to the function $u$ itself on $\Omega$. The reader is referred to \cite{T2014} and \cite{JT2014} for further details concerning our notions of domain convexity.

\vskip10pt

\section{Application to existence and regularity} \label{section 4}

For our applications to  existence, we will assume the function $\psi$ is separable in the sense that 
\begin{equation}\label{psi}
|\psi|(x,u,p)=\frac{f(x)}{f^*\circ Y(x,u,p)},
\end{equation}
for positive intensities $f\in L^1(\Omega)$ and $f^*\in L^1(\Omega^*)$. Then a necessary condition for the existence of an elliptic solution with the mapping $Tu$ being a diffeomorphism, to the second boundary value problem \eqref{MATE}, \eqref{Second boundary problem}, is the conservation of energy
\begin{equation}\label{conservation of energy}
\int_\Omega f =\int_{\Omega^*} f^*.
\end{equation}  

To fit our previous conditions on $B$ we will assume the functions $f$ and $f^*$ are both $C^2$ smooth, with positive lower and upper bounds. 

It then follows from Theorem 3.1, that our classical existence result, Theorem 1.1 in \cite{JT2018}, holds without assuming any of the conditions A3, A4w and A4$^*$w. Moreover, from \cite{Ra2020, Ra2021}, this result can be refined in the sense that for any $x_0\in \Omega$ and $u_0\in J_0$, sufficiently far from the boundary of the gradient control interval $J_0$ in condition A5, there exists a unique $g$-convex, uniformly elliptic solution $u$, of the second boundary value problem \eqref{PJE}, \eqref{Second boundary problem}, satisfying $u(x_0) = u_0$. We will now formulate these extensions more explicitly, with the interval $J_0$ also permitted to be finite. First we repeat the formulation of condition A5 in the finite case.

\begin{itemize}
\item[{\bf A5}:]
There exists  an open interval $J_0 =(m_0, M_0)\subset J$ 
and a positive constant $K_0$, such that 
$$  |g_x(x,y,z)| < K_0, $$
for all $x\in\bar\Omega, y\in \bar\Omega^*, g(x,y,z) \in J_0$.
\end{itemize}

Then we have the following extension of the classical existence results in \cite{JT2018, Ra2021}. Taking account of condition (i) in Theorem 3.1, it will be convenient in its formulation to use domain convexity with respect to the generating function $g$, as in \cite{Ra2021}, rather than the mapping $Y$, and treat the slightly more general situation in a remark. 

\begin{theorem}\label{Th4.1}
Let $g\in C^4(\Gamma)$ be a generating function satisfying conditions A1, A2, A1*, A3w, and A5, for $C^4$ bounded domains
$\Omega$, $\Omega^*$ in $\mathbb{R}^n$, with $\Omega$ uniformly $g$-convex with respect to $y\in\bar\Omega^*$, $z\in g^*(\bar\Omega,y, \bar J{_0})$  and $\Omega^*$  uniformly $g^*$-convex with respect to $x\in\bar\Omega$, $u\in \bar J_0$. Also assume there exists a $g$-affine function, $g_0 = g(\cdot, y_0, z_0)$, on $\bar\Omega$ satisfying $Tg_0 = y_0 \in \Omega^*$,
\begin{equation}\label{g_0}
g_0(\bar\Omega) \subset (m_0+K_0d, M_0 - K_0d),
\end{equation} 
and
\begin{equation}\label{4.4}
 g(\Omega,y,z) \subset J,\ \text{for all} \ y\in \Omega^*,  \ z\in g^*(\Omega,y, (m_0, m_0 + K_0d)), 
 \end{equation}
\noindent  where $d=\text{diam}(\Omega)$.  Suppose also the function $\psi$ satisfies \eqref{psi}, \eqref{conservation of energy}. Then for any $x_0 \in \Omega$, there exists a unique $g$-convex elliptic solution $u\in C^3(\bar \Omega)$ of the second boundary value problem \eqref{PJE}, \eqref{Second boundary problem}, satisfying $u(x_0) = g_0(x_0)$.  Furthermore, the mapping $Tu$ is a $C^2$ smooth diffeomorphism from $\bar\Omega$ to $\bar \Omega^*$.
\end{theorem}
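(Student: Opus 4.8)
The plan is to follow the method of continuity, reducing the existence statement to the a priori estimates assembled in Sections 2 and 3 together with the boundary estimates from \cite{JT2014}. First I would set up the continuity family by connecting the given data $(f, f^*)$ to a trivial datum for which a solution is known (for instance, by linearly interpolating the normalized densities while keeping the energy balance \eqref{conservation of energy}), with the anchoring condition $u(x_0) = g_0(x_0)$ fixing the additive constant throughout and thereby making the boundary value problem well posed and the solution set closed under $C^2$ convergence. The openness of the set of attainable parameters follows from the implicit function theorem applied to the linearized operator $\mathcal L$ together with the oblique boundary condition coming from \eqref{Second boundary problem}, which is invertible on the appropriate $C^{2,\alpha}$ spaces once the solution is elliptic; this is exactly the scheme used in Theorem 1.1 of \cite{JT2018}, so I would invoke it rather than reproduce it.

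The heart of the matter is closedness, i.e. the a priori estimates. Here I would proceed in the usual hierarchy. Condition A5 together with \eqref{g_0} and the $g$-affine barrier $g_0$ gives the $C^0$ bound and, crucially, keeps $u(\Omega)$ trapped inside $J_0$ (and the relevant $g$-supports inside $J$, via \eqref{4.4}), so that all structural hypotheses of the preceding theorems apply uniformly along the family. The gradient bound $|Du| < K_0$ is then immediate from A5 since $g_x(x, Y, Z) = Du$ and $g(x,Y,Z) = u \in J_0$. For the second derivative bound I would first use the uniform $g$-convexity of $\Omega$ and $\Omega^*$ to obtain the boundary estimate $\sup_{\partial\Omega}|D^2 u|$ exactly as in Section 3 (boundary estimate) of \cite{JT2014}, noting that the $Y$-convexity conditions needed there are implied by the $g$-convexity hypotheses assumed in the theorem; then the global estimate \eqref{bound} of Theorem \ref{Th3.1} upgrades this to an interior-and-boundary bound without any of A3, A4w, A4$^*$w, since its hypotheses (i) and (ii) are arranged by the domain convexity assumptions and by the $g$-convexity of solutions along the family (the latter being guaranteed by the remark after the definition of $g^*$-convexity in Section 3, using that $\Omega$ is $g$-convex w.r.t.\ $y \in \Omega^*$, $z = g^*(x,y,u_0)$). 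The injectivity of $Tu$ needed in hypothesis (ii) of Theorem \ref{Th3.1} is itself part of the conclusion, so I would either carry it along the family as an open-closed property or, more cleanly, deduce it from the $g$-convexity of $u$ and the $g^*$-convexity of the image together with the global univalence arguments of \cite{T2014, JT2018}.

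Once the $C^2$ bound is in hand, ellipticity is preserved uniformly because $B = \det E \cdot \psi$ is bounded above and below along the family, so the linearized equation is uniformly elliptic; then the Evans--Krylov and Schauder theory give uniform $C^{2,\alpha}(\bar\Omega)$ bounds (using the oblique boundary condition), which closes the continuity method and yields a solution $u \in C^3(\bar\Omega)$ (indeed $C^{4,\alpha}$ given the $C^4$ data, but $C^3$ suffices for the statement). Uniqueness of the $g$-convex elliptic solution with $u(x_0) = g_0(x_0)$ follows from the comparison principle for the second boundary value problem in the $g$-convex class, as in \cite{JT2018}: two such solutions have the same image and the same anchored value, and the structure \eqref{psi} forces them to coincide. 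Finally, that $Tu$ is a $C^2$ diffeomorphism from $\bar\Omega$ onto $\bar\Omega^*$ follows from the uniform ellipticity (so $\det DY \neq 0$), the injectivity of $Tu$, and the boundary condition $Tu(\Omega) = \Omega^*$, with $C^2$ regularity of the inverse coming from $Tu \in C^2$ and the inverse function theorem. The main obstacle I expect is the interplay of hypotheses (i) and (ii) of Theorem \ref{Th3.1} with the continuity family --- specifically, verifying that $g$-convexity of solutions and injectivity of $Tu$ persist along the entire path so that the global second derivative estimate is genuinely uniform; this is the place where the strict convexity input of \cite{GK2017, Ra2021} is doing the real work, replacing the monotonicity of $A$ that was used in \cite{JT2018, Ra2021}.
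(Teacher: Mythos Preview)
Your outline is essentially the paper's own argument: run the continuity (degree-theoretic) scheme of \cite{JT2018}, replacing the old global second derivative estimate by Theorem~\ref{Th3.1}, and verify hypotheses (i) and (ii) of Theorem~\ref{Th3.1} along the family via the assumed $g$-convexity of $\Omega$ (which yields uniform $Y$-convexity, condition (i), and $g$-convexity of elliptic solutions through Lemma~2.1 of \cite{T2021}) together with mass balance and \eqref{4.4}.

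Two points need correction. First, the anchoring $u(x_0)=g_0(x_0)$ does \emph{not} ``fix an additive constant'': since $A$ depends on $u$, generated Jacobian equations have no translation invariance, and the homotopy in \cite{JT2018} instead forces well-posedness through the penalisation factor $e^{[\tau(1-t)+\epsilon](u-u_0)}$, yielding only that the graphs of $u$ and $g_0$ intersect somewhere; pinning the intersection at the prescribed point $x_0$ requires Rankin's modification of the homotopy, equation~(93) in \cite{Ra2021}, which the paper invokes explicitly. Second, uniqueness is not a comparison principle from \cite{JT2018} but Rankin's result \cite{Ra2020} that distinct $g$-convex solutions of a generated Jacobian equation cannot intersect; once $u(x_0)=g_0(x_0)$ is fixed, this immediately gives uniqueness.
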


\begin{remark}\label{Remark 4.1}
To avoid possible confusion, we point out that here, as in condition (2.3) in \cite{JT2014} and Theorem 1.1 in  \cite{T2021}, we are using the following meaning of the diameter of of a domain $\Omega \subset \mathbb{R}^n$,

$$ \text{diam}(\Omega) = \sup_{x,y \in \Omega} d_{\Omega} (x,y), $$

\noindent where $d_{\Omega} (x,y) = \text{dist}_\Omega(x,y)$ is the distance in $\Omega$ between $x$ and $y$, that is the infimum of the lengths of $C^1$ curves in $\Omega$ joining the points $x$ and $y$. Moreover, we may also refine condition \eqref{g_0} by replacing $d$ by 
$d^\prime:= \text{diam}(\Omega^\prime)$, where $\Omega^\prime$ is any larger domain than $\Omega$, also satisfying condition A5. If 
$\Omega^\prime = \hat\Omega$ is the convex hull of $\Omega$, we obtain the usual notion of diameter in $\mathbb{R}^n$. In fact, the above confusion goes back to the statement of the existence result for generalized solutions in Theorem 4.2 in \cite{T2014}, where the corresponding distance in $\Omega$ is also intended. 
\end{remark}

\begin{proof} 
Substituting the second derivative estimate in Theorem 3.1 for that in Theorem 3.1 in \cite{JT2014}, we would infer from the proof of Theorem 1.1 and Remark 3.2 in \cite{JT2018}, the existence of a solution $u$ whose graph intersects that of $g_0$. For this we need to observe that the assumed uniform $g$-convexity conditions on  $\Omega$ would imply the corresponding uniform $Y$-convexity of $\Omega$ as well as the supplementary condition (i) and the $g$-convexity of an elliptic solution $u$, from Lemma 2.1 in \cite{T2021}. Then the rest of the supplementary condition (ii) now follows from the mass balance condition \eqref{conservation of energy} and our assumed condition \eqref{4.4}. To get the full strength of Theorem 4.1 we then need to adjust the homotopy family (3.4) in \cite{JT2018}, as done by Rankin in equation (93) in \cite{Ra2021}, to conclude the existence of a solution $u$, with graph intersecting that of $g_0$ at $x_0$, the uniqueness of which then follows from \cite{Ra2020}. 
\end{proof}

\begin{remark}\label{Remark 4.2}
We can write a cleaner but slightly weaker version of Theorem 4.1 by replacing the uniform $g$-convexity condition on $\Omega$ by uniform 
$Y$-convexity as in Theorem 1.1 of \cite{JT2018} with our condition on $g_0$, \eqref{g_0}, strengthened to 
\begin{equation}\label{g_0w}
g_0(\bar\Omega) \subset (m_0+2K_0d, M_0 - 2K_0d),
\end{equation} 
which also implies condition \eqref{4.4}.  As mentioned above, we also have a messier, more general statement if we replace the uniform $g$-convexity of  $\Omega$ by uniform $Y$-convexity, but still assume the corresponding $g$-convexity of $\Omega$ and supplementary condition (i) in Theorem 3.1.
Note that when we use Theorem 3.1 in \cite{JT2014} in the proof of Theorem 4.1, we do not need this last condition when any of the conditions A3, A4w and A4$^*$w hold.  \end{remark}

From the proof of Theorem 2 in \cite{Ra2021}, which combines the existence of classical solutions with the local regularity of strictly convex generalized solutions from \cite{T2021} and the uniqueness from \cite{Ra2020}, we then have the following global regularity result, which extends Theorem 2 in \cite{Ra2021} to the case when A4w is not assumed.

\begin{corollary} Let $u$ be a $g$-convex generalized solution of the second boundary value problem, \eqref{PJE}, \eqref{Second boundary problem}, where $g,\Omega, \Omega^*$ and $\psi$ satisfy the hypotheses of Theorem 4.1, with $g_0$ a $g$-support of $u$ at some point $x_0\in\Omega$. Then
$u\in C^3(\bar\Omega)$ is a classical elliptic solution of \eqref{PJE}, \eqref{Second boundary problem} and $Tu$ a $C^2$ smooth diffeomorphism from $\bar\Omega$ to $\bar \Omega^*$.
\end{corollary}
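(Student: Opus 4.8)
The plan is to derive Corollary 4.1 from Theorem 4.1 together with the interior regularity and uniqueness results already cited. The starting point is that, under the hypotheses of Theorem 4.1, we already know an elliptic solution exists: namely, given the fixed $g$-support $g_0$ of $u$ at $x_0$ and applying Theorem 4.1 with the prescribed value $u(x_0) = g_0(x_0)$, there exists a $g$-convex elliptic solution $\tilde u \in C^3(\bar\Omega)$ of the second boundary value problem \eqref{PJE}, \eqref{Second boundary problem} with $\tilde u(x_0) = g_0(x_0)$ and $T\tilde u$ a $C^2$ diffeomorphism from $\bar\Omega$ to $\bar\Omega^*$. First I would verify that this $\tilde u$, being a classical elliptic solution, is in particular a $g$-convex generalized solution in the sense of Section 4 of \cite{T2014}, with density $f = |\psi|(\cdot,\tilde u, D\tilde u)$ satisfying $\int_\Omega f = |\Omega^*|$ after the normalization via \eqref{psi}, \eqref{conservation of energy}; this is the same reduction used in the proof of Theorem \ref{Th3.1}.

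The second step is a uniqueness argument. The given generalized solution $u$ in the statement of the corollary and the classical solution $\tilde u$ produced above are both $g$-convex generalized solutions of the same second boundary value problem, both possessing $g_0$ as a $g$-support at the common point $x_0$, hence agreeing with $g_0$ in value at $x_0$. By the uniqueness result of Rankin \cite{Ra2020} (the same result invoked at the end of the proof of Theorem 4.1), two $g$-convex generalized solutions of \eqref{PJE}, \eqref{Second boundary problem} that take the same value at a single interior point must coincide. Therefore $u \equiv \tilde u$, and so $u$ inherits all the regularity of $\tilde u$: $u \in C^3(\bar\Omega)$ is a classical elliptic solution and $Tu$ is a $C^2$ smooth diffeomorphism from $\bar\Omega$ onto $\bar\Omega^*$. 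This mirrors the structure of the proof of Theorem 2 in \cite{Ra2021}, which, as noted in the excerpt, combines existence of classical solutions, interior regularity of strictly $g$-convex generalized solutions from \cite{T2021}, and uniqueness from \cite{Ra2020}; the only change is that the classical existence input is now Theorem 4.1 rather than its counterpart under the additional hypothesis A4w.

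There is one technical point to check before the uniqueness step can be applied cleanly: one must ensure the a priori gradient control needed for \cite{Ra2020} and for the regularity theory, namely that $u(\bar\Omega)$ stays in the interval $J_0$ of condition A5, or at least within a strictly interior subinterval. This is where I expect the main (though still routine) work to lie. For the classical solution $\tilde u$ this is built into the proof of Theorem 4.1 via condition \eqref{g_0} and the diameter bound $K_0 d$; for the a priori given generalized solution $u$ one argues the same way, using that $u$ has $g_0$ as a support at $x_0$ with $g_0(\bar\Omega) \subset (m_0 + K_0 d, M_0 - K_0 d)$ by \eqref{g_0}, and that a $g$-convex generalized solution with one-to-one $Tu$ (forced here by the mass balance \eqref{conservation of energy} and \eqref{4.4}) cannot deviate from its support by more than the $K_0$-Lipschitz oscillation over the intrinsic diameter $d$. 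Once $u(\bar\Omega) \subset\subset J$ with the A5 bound in force, the strict $g$-convexity and interior $C^{2,\alpha}$ regularity of \cite{T2021} (equivalently via Theorem \ref{Th3.1} here), the global second derivative bound, and the global Schauder and Caffarelli-type estimates apply, and the uniqueness of \cite{Ra2020} identifies $u$ with $\tilde u$, completing the proof.
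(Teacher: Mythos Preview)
Your proposal is correct and follows essentially the same route that the paper indicates: the paper does not write out a separate proof for Corollary 4.1 but simply observes that it follows from the proof of Theorem~2 in \cite{Ra2021}, with the classical existence input replaced by Theorem~4.1, combined with the local regularity of strictly $g$-convex generalized solutions from \cite{T2021} and the uniqueness result of \cite{Ra2020}. Your write-up expands exactly this outline---produce the classical solution $\tilde u$ from Theorem~4.1 with $\tilde u(x_0)=g_0(x_0)$, check the $J_0$-control on $u$ via the support $g_0$ and condition~\eqref{g_0}, and then invoke \cite{Ra2020} to identify $u$ with $\tilde u$---so there is no substantive difference between your argument and the paper's.
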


\begin{remark}\label{Remark 4.3}
The modified hypotheses of Theorem 4.1 in Remark 4.2 are also applicable to Corollary 4.1. Moreover, if any of conditions A3, A4w and A4$^*$w hold, we need only assume, in accordance with Theorem 2 in \cite{Ra2021}, that (4.4) only holds for any $g$-support of $u$. By adapting the uniqueness argument for global optimal transportation regularity in Section 6 of \cite{TW2009}, we may remove condition (4.4) completely in these cases if also 

$$\sup_\Omega u < M_0 - 2K_0d.$$


\noindent The overall proof is also much simpler in this case in that we not need to use the strict convexity of generalized solutions and their local regularity. A similar remark applies to the general case in Corollary 4.1, except we still need condition (4.4) for the strict convexity control used in our proof of the second derivative estimates in Theorem 3.1.
\end{remark}

\vskip10pt


\baselineskip=12pt
\parskip=0pt

\end{document}